\documentclass[a4paper,12pt]{amsart}

\usepackage[english]{babel}
\usepackage[utf8]{inputenc}
\allowdisplaybreaks  

\usepackage{amssymb,xypic,amscd}

\usepackage{mathtools}
\usepackage{etoolbox}

\usepackage[colorlinks,linkcolor=Red3,citecolor=Red3,urlcolor=Cyan3]{hyperref}

\usepackage{enumitem}

\usepackage{parskip}

\usepackage[svgnames,x11names,dvipsnames]{xcolor}

\usepackage{bm}

\newtheorem{definition}[equation]{Definition}
\newtheorem{theorem}[equation]{Theorem}

\newtheorem{corollary}[equation]{Corollary}

\newtheorem{proposition}[equation]{Proposition}

\theoremstyle{remark}
\newtheorem{remark}[equation]{Remark}
\theoremstyle{remark}
\newtheorem{example}{Example}

\numberwithin{equation}{section} 

\setcounter{tocdepth}{1}



\newcommand\res{\operatorname{Res}}

\renewcommand\deg{\operatorname{deg}}


%

\DeclarePairedDelimiterXPP\globalsymbol[1]{}{\langle}{\rangle}{}{\ifblank{#1}{\cdot,\cdot}{#1}}
\newcommand{\normSymbol}{N_{k(x)/k}}
\DeclarePairedDelimiterXPP\norm[1]{\normSymbol}{\lparen}{\rparen}{}{\ifblank{#1}{\cdot}{#1}}
\DeclarePairedDelimiterXPP\localsymbol[1]{}{\lparen}{\rparen}{}{\ifblank{#1}{\cdot,\cdot}{#1}}
\DeclarePairedDelimiterXPP\globalCommutator[1]{}{\lbrace}{\rbrace}{\raisebox{0pt}[1ex][1ex]{$^{\mathbb{A}_{X}}_{\mathbb{A}^{+}_{X}}$}}{\ifblank{#1}{\cdot,\cdot}{#1}}
\DeclarePairedDelimiterXPP\localCommutator[1]{}{\lbrace}{\rbrace}{^{K_{x}}_{\widehat{\O}_{X,x}}}{\ifblank{#1}{\cdot,\cdot}{#1}}

\renewcommand{\O}{{\mathcal{O}}}


\newcommand{\bC}{{\mathbb{C}}}
\newcommand{\bR}{{\mathbb{R}}}
\newcommand{\bZ}{{\mathbb{Z}}}
\newcommand{\bN}{{\mathbb{N}}}

\newcommand{\bQ}{{\mathbb{Q}}}

\newcommand{\cS}{{\mathcal{S}}}
\newcommand{\vand}{{\operatorname{V}}}
\renewcommand{\norm}{{\operatorname{N}}}

\renewcommand\tilde{\widetilde}

\makeatletter
\@namedef{subjclassname@2020}{%
  \textup{2020} Mathematics Subject Classification}
\makeatother


\title[The zeta function of a recurrence sequence]{The zeta function of a recurrence sequence of arbitrary degree}

\author[Á. Serrano~Holgado]{Álvaro Serrano Holgado}
\email{Alvaro\_ Serrano@usal.es}

\author[L.~M.~Navas~Vicente]{Luis Manuel Navas Vicente}
\email{navas@usal.es}

\thanks{Research of both authors supported by grant PGC2018-099599-B-I00 of the MICINN (Spain) and that of the second author also by grant PID2021-124332NB-C22 of the MICINN (Spain).}
%
%
\address{Departamento de Matem\'aticas and IUFFYM, Universidad de
Salamanca,  Plaza de la Merced 1-4
        \\
        37008 Salamanca. Spain.
        \\
         Tel: +34 923294460. 
}

\subjclass[2020]{Primary 11M41, 30B50; Secondary 11B37, 11B39, 30B40}

\keywords{Linear recurrence sequence; Dirichlet series; Analytic continuation; Zeta function}


\begin{document}

\maketitle

\begin{abstract}
We consider a Dirichlet series $\sum_{n=1}^{\infty} a_{n}^{-s}$ where $a_{n}$ satisfies a linear recurrence of arbitrary degree with integer coefficients. Under suitable hypotheses, we prove that it has a meromorphic continuation to the complex plane, giving explicit formulas for its pole set and residues, as well as for its finite values at negative integers, which are shown to be rational numbers. To illustrate the results, we focus on some concrete examples which have also been studied previously by other authors.
\end{abstract}

\section{Introduction}
\label{sec:intro}

Around the turn of the millennium, \cite{Egami} and \cite{Navas} independently considered the Dirichlet series $\sum_{n=1}^{\infty} F_{n}^{-s}$ for the Fibonacci sequence, establishing its meromorphic continuation to the complex plane. In addition, a problem in the \emph{American Mathematical Monthly} posed this question for a recurrence of degree $2$ of the form $a \alpha^{n} + b \alpha^{-n}$, where $a,b > 0$ and $\alpha > 1$, corresponding to the quadratic polynomial $x^{2} - (\alpha + \alpha^{-1}) x + 1$. In~\cite{Kamano} the same is done for a general quadratic recurrence.

In addition~\cite{Navas} gives explicit formulas for the residues, for the finite values at negative integers, and also some relations for values at positive integers. 
The latter question specifically has also been studied and generalized, for quadratic recurrences in~\cite{ElsnerShimomuraShiokawa,Nakamura, Komatsu0}, for the Tribonacci sequence in~\cite{Komatsu2}, and for a recurrence of arbitrary degree in~\cite{Komatsu1}.

Recently~\cite{SmajlovicLucasHurwitz} consider a Hurwitz-type zeta function, namely arising from the meromorphic continuation of a series of the form $\sum_{n=1}^{\infty} (a_{n} + x)^{-s}$ for a general Lucas quadratic sequence with real coefficients and the same authors in~\cite{SmajlovicTribonacci} consider a general real cubic recurrence, giving the Tribonacci sequence as an example.

Finally, we should mention~\cite{MeherRout1, MeherRout2}, where the authors consider an analogous multiple-Lucas zeta functions and twists of these by Dirichlet characters.

In this paper, we generalize most of these results, considering a general recurrence sequence $a_{n}$ of arbitrary degree and the Dirichlet series $\sum_{n=1}^{\infty} a_{n}^{-s}$. We will restrict ourselves to recurrences over the integers because we are specially interested in arithmetic properties of the finite values of the analytic continuation, for example their rationality at negative integers. Strictly speaking, this restriction is not necessary for many of our results, in particular for establishing the analytic continuation.

We begin in Section~\ref{sec:recurrence-sequences} recalling the basic results we need on recurrence sequences over the integers. In Section~\ref{sec:dirichlet-intro} we take up the study of the associated Dirichlet series and establish its basic properties. Their meromorphic continuation which defines the recurrence zeta function and the determination of its pole set and residues is taken up in Section~\ref{sec:meromorphic-continuation}. Section~\ref{sec:examples} reviews some of the standard examples. Finally, we conclude in Section~\ref{sec:negative integers} with a study of the values of the zeta function at negative integers, proving their rationality.

\section{General remarks about recurrence sequences}
\label{sec:recurrence-sequences}

Before going into the study of the Dirichlet series associated to a linear recurrence sequence, let us recall a few known facts about such sequences that will be useful in what follows.

Let us formalize what we mean by ``linear recurrence sequence''. First, for a commutative ring $R$ with identity $1\in R$, let $R^{\infty}$ be the set of sequences of elements of $R$, that is, of mappings $N\rightarrow R$. Note that this is an $R$-module. We can define in $R^{\infty}$ the $R$-linear operator $\nabla$ as
\[
	\begin{array}{rccl}
	\nabla: & R^{\infty} & \longrightarrow & R^{\infty} \\
	 & \{a_n\}_{n\in\bN} & \longmapsto & \{a_{n+1}\}_{n\in\bN}\end{array}.
\]

For brevity, we write $\nabla a_n=a_{n+1}$. Since this operator is $R$-linear, it makes sense to consider, for a polynomial $Q(x)\in R[x]$, the operator $Q(\nabla)$, and this way we can give $R^{\infty}$ a $R[x]$-module structure, where
\[
	Q(x)\cdot \{a_n\}=Q(\nabla)\{a_n\}.
\]

This is valid for any ring, but let us specialize now to the case $R=\bZ$, which will be the case in everything that follows.

\begin{definition}\label{d:LRS}
We say that $\{a_n\}\in \bZ^{\infty}$ is a \textbf{linear recurrence sequence} (LRS) if there is a \emph{monic} polynomial $Q(x)\in \bZ[x]$ such that $Q(\nabla)a_n=0$.
\end{definition}

For a LRS $\{a_n\}$ over $\bZ$, we can define its annihilator ideal by
\[
	A_{\bZ}(\{a_n\})=\{Q(x)\in\bZ[x]\:\vert\: Q(\nabla)a_n=0\}\subseteq \bZ[x].
\]

If $\{a_n\}\neq 0$, this is a proper ideal in $\bZ[x]$. Thanks to Gauss' lemma, we have the following result:

\begin{proposition}\label{p:minimal-polynomial}
Let $\{a_n\}$ be a LRS over $\bZ$. There is a unique monic polynomial $P(x)\in\bZ[x]$ such that $A_{\bZ}(\{a_n\})=(P)$.
\end{proposition}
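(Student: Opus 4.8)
The plan is to reduce to the principal ideal domain $\bQ[x]$ and then descend back to $\bZ[x]$ by means of Gauss' lemma. Regarding $\{a_n\}$ as an element of $\bQ^{\infty}$, the construction preceding the statement endows $\bQ^{\infty}$ with a $\bQ[x]$-module structure, so that the annihilator $A_{\bQ}(\{a_n\})=\{R(x)\in\bQ[x]:R(\nabla)a_n=0\}$ is an ideal of $\bQ[x]$. By Definition~\ref{d:LRS} the ideal $A_{\bZ}(\{a_n\})$ contains a monic polynomial $Q(x)$, hence $A_{\bQ}(\{a_n\})$ is a nonzero ideal of the PID $\bQ[x]$, and as such it admits a unique monic generator $P(x)\in\bQ[x]$.

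The first key step is to show that in fact $P(x)\in\bZ[x]$. Since $Q(x)\in A_{\bZ}(\{a_n\})\subseteq A_{\bQ}(\{a_n\})=(P)$, we have $P\mid Q$ in $\bQ[x]$, so $Q=PS$ for some monic $S(x)\in\bQ[x]$. Here Gauss' lemma enters: a monic polynomial of $\bZ[x]$ that factors into monic factors over $\bQ$ already factors into monic factors over $\bZ$; consequently both $P$ and $S$ lie in $\bZ[x]$. This is the heart of the argument and the place where passing from $\bZ$ to its field of fractions and back is genuinely used. I expect this integrality of $P$ to be the only real obstacle, the remaining inclusions being formal.

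It then remains to verify that $A_{\bZ}(\{a_n\})=(P)$ inside $\bZ[x]$ and that $P$ is the unique monic generator. Since $P$ annihilates $\{a_n\}$ over $\bQ$ and has integer coefficients, $P(\nabla)a_n=0$, so $P\in A_{\bZ}(\{a_n\})$ and thus $(P)\subseteq A_{\bZ}(\{a_n\})$. Conversely, given $R(x)\in A_{\bZ}(\{a_n\})$, division by the monic polynomial $P$ is valid in $\bZ[x]$ and yields $R=PT+r$ with $T,r\in\bZ[x]$ and $\deg r<\deg P$; comparing this with the divisibility $P\mid R$ that holds in $\bQ[x]$ (because $R\in A_{\bQ}(\{a_n\})=(P)$), the uniqueness of quotient and remainder over $\bQ[x]$ forces $r=0$, whence $R=PT\in(P)$. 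Finally, for uniqueness, if $P'$ were another monic generator then $P$ and $P'$ would be associates in $\bZ[x]$, differing by a unit $\pm 1$; being both monic, they coincide. Once Gauss' lemma supplies the integrality of $P$, these last steps rest only on division by a monic polynomial, which is available over any commutative ring.
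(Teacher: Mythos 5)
Your proposal is correct and follows essentially the same route as the paper: pass to the PID $\bQ[x]$ to obtain the unique monic generator $P$ of $A_{\bQ}(\{a_n\})$, invoke Gauss' lemma on a monic factorization of $Q$ to conclude $P\in\bZ[x]$, and then verify $A_{\bZ}(\{a_n\})=(P)$. The only difference is that you spell out the final ideal equality (via monic division in $\bZ[x]$), which the paper dismisses as straightforward.
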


\begin{proof}
Let $Q(x)\in\bZ[x]$ be a monic polynomial in $A_{\bZ}(\{a_n\})$, which exists because $\{a_n\}$ is a LRS over $\bZ$. Since $A_{\bZ}(\{a_n\})\subseteq A_{\bQ}(\{a_n\})$ and $A_{\bQ}(\{a_n\})=(P)$ for some unique monic $P(x)\in\bQ[x]$ (because $\bQ[x]$ is a principal ideal domain), $P(x)$ divides $Q(x)$ and it is the product of some of the irreducible factors of $Q(x)$ over $\bQ$. However, due to Gauss' lemma, these factors are in $\bZ$, and therefore $P(x)\in\bZ[x]$. It is now straightforward that $A_{\bZ}(\{a_n\})=(P)$.
\end{proof}

Thanks to Proposition \ref{p:minimal-polynomial}, the following definition makes sense:

\begin{definition}\label{d:minimal-polynomial}
Let $\{a_n\}$ be a LRS over $\bZ$. The unique monic polynomial $P(x)\in\bZ[x]$ such that $M_{\bZ}(\{a_n\})=(P)$ is called the \textbf{minimal polynomial} of the LRS  $\{a_n\}$. It is minimal in the sense that it is the monic polynomial of least degree in $M_{\bZ}(\{a_n\})$.
\end{definition}

\begin{remark}\label{r:irreducibility-minimal}
If $\{a_n\}$ is annihilated by an irreducible monic polynomial, it is clear from the proof of Proposition \ref{p:minimal-polynomial} that it must be its minimal polynomial. However, unlike the case of algebraic integers, minimal polynomials of LRS need not be irreducible: take, for example, $a_n=2^n+3^n$. It is in a LRS because $a_{n+2}=5a_{n+1}-6a_n$ for all $n\in\bN$. This means that the polynomial $P(x)=x^2-5x+6=(x-2)(x-3)$ annihilates it, but it is easy to check that none of its factors do, so $P(x)$ is the minimal polynomial.
\end{remark}

The last result that we need with respect to LRS  is what we may call a ``Binet-like'' formula, due to its similarity with the Binet formula for the Fibonacci sequence, which this generalises. It can be found in \cite{Everest}, \S\textbf{1.1.6} for any base field with characteristic $0$, we state it for $\bQ$:

\begin{proposition}\label{p:Binet-formula}
Let $\{a_n\}$ be a LRS  over $\bZ$ with minimal polynomial $P(x)\in\bZ[x]$. Let $K$ be the splitting field of $P(x)$ over $\bQ$ and $\alpha_1,...,\alpha_r$ the roots of $P(x)$ in $K$, each with multiplicity $m_i$. There are polynomials $\lambda_1(x),...,\lambda_r(x)\in K[x]$ (in fact, $\lambda_i(x)\in\bQ(\alpha_i)[x]$), with $\deg p_i\leq m_i-1$, such that
\[
	a_n=\lambda_1(n)\alpha_1^n+...+\lambda_r(n)\alpha_r^n \quad \forall n\in\bN.
\]
\end{proposition}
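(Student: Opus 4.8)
The plan is to extend scalars to the splitting field $K$ and analyze the kernel of the difference operator $P(\nabla)$ acting on the $K$-valued sequences. First I would regard $\{a_n\}$ as an element of $K^{\infty}$, which carries the $K[x]$-module structure $x\cdot\{a_n\}=\{a_{n+1}\}$ described in the preamble; since $P$ annihilates $\{a_n\}$, the sequence lies in $\Ker P(\nabla)$. Over $K$ we factor $P(x)=\prod_{i=1}^{r}(x-\alpha_i)^{m_i}$, and the primary factors $(x-\alpha_i)^{m_i}$ are pairwise coprime. A B\'ezout relation $1=\sum_{i}u_i(x)\,P(x)/(x-\alpha_i)^{m_i}$ then yields, upon substituting $x=\nabla$ and applying both sides to $\{a_n\}$, a decomposition $a_n=\sum_{i}b_n^{(i)}$ in which each $\{b_n^{(i)}\}$ lies in $\Ker(\nabla-\alpha_i)^{m_i}$, since $(\nabla-\alpha_i)^{m_i}$ applied to the $i$-th summand reproduces $u_i(\nabla)P(\nabla)\{a_n\}=0$. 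This reduces the problem to identifying the kernel of a single primary factor $(\nabla-\alpha)^m$, and shows moreover that $\Ker P(\nabla)=\bigoplus_i \Ker(\nabla-\alpha_i)^{m_i}$.

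Next I would determine $\Ker(\nabla-\alpha)^m$ explicitly; here I may assume the roots are nonzero, as is the case of interest. The cleanest route is to conjugate by the geometric sequence: writing $S_\alpha\{c_n\}=\{\alpha^n c_n\}$, one checks $\nabla S_\alpha=\alpha\,S_\alpha\nabla$, whence $S_\alpha^{-1}(\nabla-\alpha)S_\alpha=\alpha(\nabla-1)$. Since $\nabla-1$ is the forward difference operator, whose $m$-th power annihilates precisely the polynomial sequences of degree at most $m-1$, it follows that $\Ker(\nabla-\alpha)^m=\{\,\{\alpha^n\lambda(n)\}:\lambda\in K[x],\ \deg\lambda\le m-1\,\}$, a $K$-vector space of dimension $m$. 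Feeding this back into the decomposition of the previous step gives $a_n=\sum_{i=1}^{r}\lambda_i(n)\alpha_i^n$ with $\deg\lambda_i\le m_i-1$. As a consistency check, the solution space of the monic recurrence $P(\nabla)c=0$ has dimension $\deg P=\sum_i m_i$, because a solution is determined by its first $\deg P$ terms and any such initial data extends uniquely; this matches the total dimension of the direct sum and confirms that no solutions are missed.

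It remains to refine the field of definition to $\lambda_i(x)\in\bQ(\alpha_i)[x]$, where I would use Galois descent together with the uniqueness of the primary decomposition. For $\sigma\in\operatorname{Gal}(K/\bQ)$, applying $\sigma$ coefficientwise to the identity $a_n=\sum_i\lambda_i(n)\alpha_i^n$ and using $a_n\in\bZ\subseteq\bQ$ gives $a_n=\sum_i\lambda_i^{\sigma}(n)\,\sigma(\alpha_i)^n$, where $\lambda_i^{\sigma}$ denotes the polynomial obtained by applying $\sigma$ to the coefficients of $\lambda_i$. Since $\sigma$ permutes the roots and the decomposition into $\alpha$-primary components is unique, comparison of the two expressions forces $\lambda_i^{\sigma}=\lambda_j$ whenever $\sigma(\alpha_i)=\alpha_j$. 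In particular, for $\sigma\in\operatorname{Gal}(K/\bQ(\alpha_i))$ we have $\sigma(\alpha_i)=\alpha_i$ and hence $\lambda_i^{\sigma}=\lambda_i$, so the coefficients of $\lambda_i$ are fixed by $\operatorname{Gal}(K/\bQ(\alpha_i))$ and therefore lie in $\bQ(\alpha_i)$.

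The main obstacle I anticipate is not the algebra of the decomposition, which is routine, but keeping the bookkeeping of the primary components compatible with the Galois action: the descent argument hinges on the \emph{uniqueness} of the decomposition $\Ker P(\nabla)=\bigoplus_i\Ker(\nabla-\alpha_i)^{m_i}$. I would therefore make sure to establish this directness carefully, equivalently that the sequences $\{n^j\alpha_i^n\}$ (for $0\le j\le m_i-1$ and $1\le i\le r$) are $K$-linearly independent, which is a generalized Vandermonde statement, before invoking it in the rationality step.
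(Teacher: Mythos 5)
Your argument is correct, but note that the paper does not actually prove Proposition~\ref{p:Binet-formula}: it is quoted from \cite{Everest}, \S 1.1.6, so there is no in-paper proof to compare against. What you give is a complete, self-contained version of the standard argument: the B\'ezout identity $1=\sum_i u_i(x)P(x)/(x-\alpha_i)^{m_i}$ yields the primary decomposition $\Ker P(\nabla)=\bigoplus_i\Ker(\nabla-\alpha_i)^{m_i}$ (the operators $u_i(\nabla)\,P(\nabla)/(\nabla-\alpha_i)^{m_i}$ are precisely the projections, which already gives the directness/uniqueness you worry about at the end --- the generalized Vandermonde independence is not a separate lemma you need); the conjugation $S_\alpha^{-1}(\nabla-\alpha)S_\alpha=\alpha(\nabla-1)$ cleanly identifies each primary kernel with polynomial sequences of degree $\le m-1$; and the Galois descent correctly pins $\lambda_i$ down to $\bQ(\alpha_i)$, using that $\sigma$ permutes the roots preserving multiplicities (because $P\in\bQ[x]$) and that $\{\lambda(n)\alpha^n\}_{n\ge 1}$ determines $\lambda$ when $\alpha\neq 0$. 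The one substantive caveat is the one you flag yourself: if $0$ is a root of $P$ the stated formula can fail for small $n$ (e.g.\ $a_1=1$, $a_n=0$ for $n\ge 2$ has minimal polynomial $x$), so the nonvanishing of the roots is genuinely needed; this is harmless for the paper, which only invokes the proposition for irreducible $P$ with a dominant root greater than $1$, but it means the proposition as literally stated is slightly too strong, a point the cited source handles by asserting the formula only for all sufficiently large $n$.
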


In particular, if the minimal polynomial $P(x)$ of $\{a_n\}$ is separable, $a_n$ can be expressed as a linear combination of the powers of the roots of $P(x)$. In this case, the coefficients $\lambda_i(x)=\lambda_i\in K$ can be obtained easily through a system of linear equations:

\begin{equation}\label{e:coefficient-matrix}
	\left(\begin{array}{c}
	\lambda_1 \\
	\vdots \\
	\lambda_r\end{array}\right)=\left(\begin{array}{ccc}
	\alpha_1 & \hdots & \alpha_r \\
	\vdots & \ddots & \vdots \\
	\alpha_1^r & \hdots & \alpha_r^r\end{array}\right)^{-1}\left(\begin{array}{c}
	a_1 \\
	\vdots \\
	a_r\end{array}\right).
\end{equation}

\section{Dirichlet series defined by a LRS }
\label{sec:dirichlet-intro}

A general Dirichlet series is an expression of the form
\begin{equation}\label{e:general-dirichlet}
	\sum_{n=1}^{\infty} b_ne^{-\lambda_ns},
\end{equation}
where $s$ is a complex variable, $\{b_n\}$ a sequence of complex numbers and $\lambda_n$ a strictly increasing sequence of nonnegative real numbers such that $\lambda_n\rightarrow\infty$. A classical Dirichlet series is a Dirichlet series for which $\lambda_n=\log n$.

Dirichlet series have been studied in depth and for a long time. Let us give a summary of some well-known results about them here:

There is a real number (possibly $-\infty$ or $+\infty$) $\sigma_c$ such that the series \eqref{e:general-dirichlet} converges for $\sigma=\Re(s)>\sigma_c$ and diverges for $\sigma<\sigma_c$: it is called the convergence abscissa of the series. It can be computed by:
\begin{equation}\label{e:abscissa-convergence}
	\sigma_c=\begin{cases}
		\lim\sup\frac{\log|b_1+...+b_n|}{\lambda_n}\qquad &\text{if }\sum b_k\text{ diverges}, \\
		\lim\sup\frac{\log|b_{n+1}+b_{n+2}+...|}{\lambda_n}\qquad &\text{if }\sum b_k\text{ converges}.\end{cases}
\end{equation}

There is also a real number $\sigma_a$ (again, possible $\pm\infty$) such that the series \eqref{e:general-dirichlet} converges absolutely for $\sigma>\sigma_a$ and does not for $\sigma<\sigma_a$. Applying the formula \eqref{e:abscissa-convergence} to the series $\sum |b_n e^{-\lambda_n s}|$, we get that
\begin{equation}\label{e:abscissa-absolut-convergence}
	\sigma_a=\begin{cases}
	\lim\sup\frac{\log(|b_1|+...+|b_n|)}{\lambda_n}\qquad &\text{if }\sum |b_k|\text{ diverges}, \\
	\lim\sup\frac{\log(|b_{n+1}|+...)}{\lambda_n}\qquad &\text{if }\sum|b_k|\text{ converges}.\end{cases}
\end{equation}

It is always true that
\[
	0\leq\sigma_a-\sigma_c\leq\lim\sup\frac{\log n}{\lambda_n}.
\]

For the proof of all these statements, the reader can check \S 207 in \cite{Landau}.

In the case of classical Dirichlet series, $0\leq\sigma_a-\sigma_c\leq 1$ (for example, the alternated Dirichlet series $\zeta(s)=\sum (-1)^{n+1}n^{-s}$ has $\sigma_c=0$ and $\sigma_a=1$).

We want to study Dirichlet series of the type
\[
	\sum_{n=1}^{\infty} \frac{1}{a_n^s}=\sum_{n=1}^{\infty} e^{-s\log a_n },
\]
where $\{a_n\}$ is a LRS  over $\bZ$. Note that, if this is to be coherent with our definition of a Dirichlet series, we need the sequence $\{a_n\}$ to be a strictly increasing sequence of positive integers. However, we can take a finite number of terms out of the sum \eqref{e:general-dirichlet} without changing its nature too much (since a finite sum of exponential functions is a holomorphic function), we will allow for sequences that are \emph{eventually} strictly increasing, that is, sequences for which there is some $n_0\in\bN$ such that $\{a_{n+n_0}\}_{n\in\bN}$ is strictly increasing (eventual positivity follows from this). A sufficient condition for this to happen is the following:

\begin{proposition}\label{p:esi-sequence}
Let $\{a_n\}$ be a LRS over $\bZ$ with minimal polynomial $P(x)$. Let $\alpha_1,...,\alpha_r$ be the roots of $P(x)$, each with multiplicity $m_i$, and put
\[
	a_n=\lambda_1(n)\alpha_1^n+...+\lambda_r(n)\alpha_r^n,
\]
as in Proposition \ref{p:Binet-formula}. If there is one root $\alpha_{i_0}$, with $\lambda_{i_0}(n)$ not identically $0$, verifying $|\alpha_{i_0}|>|\alpha_i|$ for every $i\neq i_0$ such that $\lambda_{i_0}(n)$ is not identically $0$ and $|\alpha_{i_0}|>1$. There are two exhaustive and mutually exclusive possibilities:
\begin{enumerate}[wide, labelwidth=!, labelindent=0pt, label=\arabic*.]
\item Either $\{a_n\}$ or $\{-a_n\}$ is eventually strictly increasing.

\item $\{a_n\}$ has an infinite number of sign changes as $n$ grows to $\infty$.
\end{enumerate}
\end{proposition}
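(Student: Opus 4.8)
The plan is to read off the behaviour of $a_n$ from its dominant term $\lambda_{i_0}(n)\alpha_{i_0}^n$, reducing the whole statement to a dichotomy governed by the sign of $\alpha_{i_0}$. The first thing I would establish is that the strict-dominance hypothesis forces $\alpha_{i_0}$ to be \emph{real}. Since $P(x)\in\bZ[x]$, its non-real roots occur in conjugate pairs of equal modulus, and since $\{a_n\}$ is a real (indeed integer) sequence, the Binet coefficients attached to two conjugate roots are themselves complex conjugates. Hence, were $\alpha_{i_0}$ non-real, its conjugate would be a distinct root $\alpha_i$ with $|\alpha_i|=|\alpha_{i_0}|$ and with $\lambda_i\not\equiv 0$, contradicting $|\alpha_{i_0}|>|\alpha_i|$. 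Therefore $\alpha_{i_0}\in\bR$, and $|\alpha_{i_0}|>1$ leaves exactly the two cases $\alpha_{i_0}>1$ and $\alpha_{i_0}<-1$.

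Next I would isolate the dominant term quantitatively. Writing $\lambda_{i_0}(n)=cn^d+\cdots$ with $c\neq 0$, the hypothesis $|\alpha_{i_0}|>|\alpha_i|$ for every $i$ with $\lambda_i\not\equiv 0$, together with the fact that exponentials dominate polynomials, yields
\[
\frac{a_n}{\lambda_{i_0}(n)\alpha_{i_0}^n}=1+\epsilon_n,\qquad \epsilon_n\to 0.
\]
In particular $|a_n|\to\infty$, and for $n$ large $\sgn(a_n)=\sgn\bigl(\lambda_{i_0}(n)\alpha_{i_0}^n\bigr)$; moreover, a nonzero polynomial has eventually constant sign, so $\sgn(\lambda_{i_0}(n))=\sgn(c)$ for $n$ large.

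I would then split into the two cases. If $\alpha_{i_0}>1$, then $\sgn(a_n)=\sgn(c)$ is eventually constant, and writing
\[
a_{n+1}-a_n=\lambda_{i_0}(n)\alpha_{i_0}^n\left[\frac{\lambda_{i_0}(n+1)}{\lambda_{i_0}(n)}\,\alpha_{i_0}(1+\epsilon_{n+1})-(1+\epsilon_n)\right],
\]
the bracket tends to $\alpha_{i_0}-1>0$ while the prefactor has eventual sign $\sgn(c)$; hence $a_{n+1}-a_n$ has eventual sign $\sgn(c)$, so $\{a_n\}$ (if $c>0$) or $\{-a_n\}$ (if $c<0$) is eventually strictly increasing, which is possibility (1). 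If $\alpha_{i_0}<-1$, then for large $n$ we have $\sgn(a_n)=\sgn(c)(-1)^n$, which alternates, producing infinitely many sign changes, which is possibility (2). The two are mutually exclusive: possibility (1) makes the sequence eventually monotone, and since $|a_n|\to\infty$ it then has eventually constant sign, ruling out the infinitely many changes of possibility (2).

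The conceptual crux is the reality argument, which is easy to overlook but is exactly what collapses the complex case; the technical point requiring the most care is the monotonicity estimate when $\deg\lambda_{i_0}=d>0$. There one must justify that the displayed bracket genuinely converges to $\alpha_{i_0}-1$, which rests on the control $\lambda_{i_0}(n+1)/\lambda_{i_0}(n)\to 1$ (valid once $\lambda_{i_0}(n)\neq 0$, hence eventually) and on $\epsilon_n\to 0$; with the polynomial factor handled this way the rest is routine.
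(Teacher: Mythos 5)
Your proof is correct and follows essentially the same route as the paper: deduce from integrality/reality that the dominant root $\alpha_{i_0}$ must be real, factor out the dominant term $\lambda_{i_0}(n)\alpha_{i_0}^n$ so that the remaining factor tends to $1$, and split according to whether $\alpha_{i_0}>1$ or $\alpha_{i_0}<-1$. If anything, you are more careful than the paper, which dismisses the eventual strict monotonicity in the case $\alpha_{i_0}>1$ as obvious, whereas your estimate for $a_{n+1}-a_n$ (showing the bracket tends to $\alpha_{i_0}-1>0$) and your explicit argument for mutual exclusivity supply the missing details.
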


\begin{proof}
Assume, without loss of generality, that $i_0=1$.

First, note that the hypothesis implies that $\alpha_1\in\mathbb{R}$: $A_{\bC}(\{a_n\})$ contains the polynomial $Q(x)=(x-\alpha_{i_1})...(x-\alpha_{i_t})$, where $\{i_1,...,i_t\}$ is the set of indices such that $\lambda_i(n)$ is not identically $0$. If it were not $Q(x)\in\bZ[x]$, then $\{a_n\}$ could not be a sequence of integers. This means that, if $\lambda_{j_1}(n)$ is not identically $0$ and $\alpha_{j_2}$ is the conjugate root of $\alpha_{j_1}$, then $\lambda_{j_2}(n)$ is also not identically $0$. Since, by hypothesis, there is no other root with modulus as large as $|\alpha_1|$, $\alpha_1$ does not have a complex conjugate and it is real.

Now, since $\lambda_1(x)$ is a polynomial (and it must have real coefficients since $\alpha_1\in\bR$), there is some $n_0\gg0$ such that $\lambda_1(n)$ has constant sign, equal to the sign of the leading coefficient of $\lambda_1(x)$, for every $n\geq n_0$. In particular, its nonzero, so for $n\geq n_0$ we can put
\[
	a_n=\lambda_1(n)\alpha_1^n\left(1+\frac{\lambda_2(n)}{\lambda_1(n)}\left(\frac{\alpha_2}{\alpha_1}\right)^n+...+\frac{\lambda_r(n)}{\lambda_1(n)}\left(\frac{\alpha_r}{\alpha_1}\right)^n\right).
\]
For every $i=2,...,r$, it is clear that
\[
	\lim_{n\rightarrow\infty}\frac{\lambda_i(n)}{\lambda_1(n)}\left(\frac{\alpha_r}{\alpha_1}\right)^n=0,
\]
so there is some $n_1>n_0$ such that, for $n\geq n_1$, $a_n$ has the same sign as $\lambda_1(n)\alpha_1^n$. If $\alpha_1>1$, this sign is that of the leading coefficient of $\lambda_1(x)$, and the fact that $a_n$ is eventually strictly increasing is obvious since $\alpha_1>1$. If $\alpha_1<-1$, this sign changes with the parity of $n$.
\end{proof}

\begin{remark}\label{r:other-esi-sequences}
The condition used in Proposition \ref{p:esi-sequence} is, while sufficient, not always necessary for a LRS  to be eventually strictly increasing sequence. For example, the polynomial $x^2-2$ has 2 roots with the same absolute value, and for any initial terms $a_1,a_2$ such that $0<a_1<a_2<2a_1$, the LRS  $\{a_n\}$ that this defines is strictly increasing. However, the existence of one dominant root $>1$ will be a necessary hypothesis in the proof of Theorem \ref{t:meromorphic continuation}, which is our main result, so there is no need to study here other kinds of eventually strictly increasing LRS .
\end{remark}

From now on we will deal with LRS  $\{a_n\}$ whose minimal polynomial is irreducible. From the discussion in the proof of Proposition \ref{p:esi-sequence} it is clear that, in this case, all the coefficients $\lambda_i(n)$ are not identically $0$. Furthermore, since irreducible polynomials are also separable, these coefficients are constant. Then, if there is a positive \emph{dominant root} (a root whose modulus is greater than that of the other roots), $\{a_n\}$ is automatically eventually strictly increasing or eventually strictly decreasing, because this root (except on the case $P(x)=(x-1)$, which gives constant constant LRS ) is also automatically greater than $1$.

Let $\{a_n\}$ be a LRS over $\bZ$ such that its minimal polynomial $P(x)$ is irreducible and has a positive dominant root ($\alpha_1$) and its coefficient in the Binet formula for $a_n$ is positive. Since there is some $n_0\gg0$ such that $\{a_{n+n_0}\}_{n\in\bN}$ is strictly increasing and positive, we will assume, without loss of generality in what follows, that $\{a_n\}$ is strictly increasing and positive. We can therefore define the Dirichlet series associated to the sequence $\{a_n\}$ as
\[
	\sum_{n=1}^{\infty}\frac{1}{a_n^s}=\sum_{n=1}^{\infty} e^{-\log s a_n }
\]

Using both formula \eqref{e:abscissa-convergence} and Binet's formula \ref{p:Binet-formula} we can deduce easily that its abscissa of convergence is
\[
	\sigma_c=\lim\sup \frac{\log n}{\log a_n}=\lim \frac{\log n}{\log \lambda_1+n\log\alpha_1}=0,
\]
and since the coefficients $b_n=1$ of the series are all positive, the same goes for its abscissa of absolute convergence. In the next section we will show that this series, that defines a holomorphic function in the half-plane $\sigma>0$, has a meromorphic continuation to the whole $s$-plane, and we will find its poles and residues.

\section{Meromorphic continuation of the Dirichlet series}
\label{sec:meromorphic-continuation}

Let, as before, $\{a_n\}$ be a LRS  of integer numbers with minimal polynomial $P(x)\in\bZ[x]$ irreducible and with a positive dominant root. Let $\alpha_1,...,\alpha_r$ be the roots of $P(x)$ (ordered by decreasing modulus, so that $\alpha_1$ is the dominant root) and $\lambda_1,...,\lambda_r$ nonzero numbers in the splitting field of $P(x)$ such that
\[
	a_n=\lambda_1\alpha_1^n+...+\lambda_r\alpha_r^n.
\]

Assume once again, without loss of generality, that $\{a_n\}$ is strictly increasing and positive (that is, $\lambda_1>0$). Then, as per the discussion at the end of the previous section, the Dirichlet series
\[
	\sum_{n=1}^{\infty}\frac{1}{a_n^s}
\]
converges and does so absolutely in the half-plane $\sigma=\Re(s)>0$, so it defines a holomorphic function $\varphi(s)$ there.

\begin{theorem}\label{t:meromorphic continuation}
With the previous hypotheses, the holomorphic function $\varphi(s)$ defined by the Dirichlet series associated to $\{a_n\}$ can be continued analytically to a meromorphic function on $\bC$ (that we will still call $\varphi(s)$) whose only singularities are simple points at the poles
\begin{equation}\label{e:poles}
 s_{n,k_1,...,k_{r-1}}=\frac{\log|\alpha_1^{-k_1}\alpha_2^{k_1-k_2}...\alpha_r^{k_{r-1}}|}{\log\alpha_1}+i\frac{\arg(\alpha_1^{-k_1}\alpha_2^{k_1-k_2}...\alpha_r^{k_{r-1}})+2\pi n}{\log\alpha_1},
\end{equation}
where the parameters $n,k_1,...,k_{r-1}$ are integer numbers satisfying
\[
	n\in\bZ, \qquad k_1\geq 0, \qquad 0\leq k_i\leq k_{i-1}\quad i=2,...,r-1.
\]
\end{theorem}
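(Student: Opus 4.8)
The plan is to factor out the dominant root and expand what remains as a binomial--multinomial series, which turns the tail of $\varphi(s)$ into a doubly indexed sum of elementary geometric series, each manifestly meromorphic in $s$. Concretely, using the Binet formula of Proposition~\ref{p:Binet-formula} I would write $a_n=\lambda_1\alpha_1^n(1+w_n)$ with $w_n=\sum_{i=2}^r c_i\rho_i^{\,n}$, where $c_i=\lambda_i/\lambda_1$ and $\rho_i=\alpha_i/\alpha_1$. Since $\alpha_1$ is the positive dominant root, $\rho:=\max_{i\ge 2}|\rho_i|=|\alpha_2/\alpha_1|<1$, so I can fix an integer $N$ large enough that simultaneously $|w_n|<1$ for all $n\ge N$ and $C:=\sum_{i=2}^r|c_i|\,|\rho_i|^N<1$. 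Splitting $\varphi(s)=\sum_{n<N}a_n^{-s}+\sum_{n\ge N}a_n^{-s}$, the first part is a finite sum of entire functions $e^{-s\log a_n}$, so only the tail requires work. Note that $w_n=a_n/(\lambda_1\alpha_1^n)-1$ is real with $1+w_n>0$, so the factorization $a_n^{-s}=\lambda_1^{-s}\alpha_1^{-ns}(1+w_n)^{-s}$ is unambiguous for the real logarithm.

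For $n\ge N$ I would expand $(1+w_n)^{-s}=\sum_{k\ge 0}\binom{-s}{k}w_n^k$ and then expand each $w_n^k$ by the multinomial theorem. For $\Re(s)$ large all the resulting series converge absolutely, so I may interchange the summations over $n$, over $k$, and over the multinomial index, and perform the geometric sum in $n$. Writing $\beta_m:=\prod_{i=2}^r\rho_i^{m_i}$ for a multi-index $m=(m_2,\dots,m_r)$, this yields
\[
\sum_{n\ge N}a_n^{-s}=\lambda_1^{-s}\sum_{k=0}^{\infty}\binom{-s}{k}\sum_{m_2+\cdots+m_r=k}\binom{k}{m_2,\dots,m_r}\Bigl(\prod_{i=2}^r c_i^{m_i}\Bigr)\frac{\beta_m^{N}\,\alpha_1^{-Ns}}{1-\alpha_1^{-s}\beta_m}.
\]
Each summand is meromorphic on all of $\bC$ with simple poles exactly at the solutions of $\alpha_1^{-s}\beta_m=1$, that is, at $s=(\log|\beta_m|+i(\arg\beta_m+2\pi n))/\log\alpha_1$ with $n\in\bZ$. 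To match the stated pole set I would reparametrise by $k_j:=m_{j+1}+\cdots+m_r$ for $j=1,\dots,r-1$, so that $m_i=k_{i-1}-k_i$ with $k_r:=0$; then $\beta_m=\alpha_1^{-k_1}\alpha_2^{k_1-k_2}\cdots\alpha_r^{k_{r-1}}$, the constraints $m_i\ge 0$ become exactly $k_1\ge 0$ and $0\le k_i\le k_{i-1}$, and substitution recovers \eqref{e:poles} (the sign of $2\pi n$ being irrelevant since $n$ ranges over $\bZ$).

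The heart of the argument, and the step I expect to be the main obstacle, is showing that this double series of meromorphic functions converges locally uniformly on $\bC$ away from its poles, so that the limit is genuinely meromorphic with no other singularities. Here the choice of $N$ pays off: by the multinomial theorem, $\sum_{m_2+\cdots+m_r=k}\binom{k}{m_2,\dots,m_r}\prod_i(|c_i|\,|\rho_i|^N)^{m_i}=C^k$ with $C<1$. On a compact set $K\subseteq\{\Re(s)\ge -M\}$ one has $|\alpha_1^{-s}\beta_m|\le \alpha_1^{M}\rho^{k}$, which tends to $0$ uniformly as $k\to\infty$; hence for all but finitely many indices the denominators $1-\alpha_1^{-s}\beta_m$ are bounded away from $0$ on $K$, and combined with the polynomial growth of $\binom{-s}{k}$ this bounds the tail by a convergent series $\sum_k \operatorname{poly}(k)\,C^k$. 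The finitely many remaining terms are meromorphic with poles in the list \eqref{e:poles}.

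It remains to read off the qualitative conclusions. Since $\Re\!\bigl(s_{n,k_1,\dots,k_{r-1}}\bigr)=\log|\beta_m|/\log\alpha_1\le k\log\rho/\log\alpha_1\to-\infty$ as $k\to\infty$, every bounded region of $\bC$ contains only finitely many poles, so the pole set is discrete; at each such point only finitely many simple-pole summands overlap, so the total has at most a simple pole there. Finally, the meromorphic function so constructed agrees with the convergent Dirichlet series on $\{\Re(s)>0\}$, so by uniqueness of analytic continuation it is the asserted continuation of $\varphi(s)$, with its only singularities the simple poles \eqref{e:poles}.
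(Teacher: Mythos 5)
Your proposal is correct and follows essentially the same route as the paper: binomial expansion of $a_n^{-s}$ after factoring out the dominant term $\lambda_1\alpha_1^n$, absolute convergence justifying the interchange of summations, summation of the geometric series in $n$, and locally uniform convergence of the resulting series of meromorphic functions away from the pole set. The only differences are cosmetic — you use a multinomial index $(m_2,\dots,m_r)$ and then reparametrise to the nested indices $k_1\ge k_2\ge\cdots\ge k_{r-1}$ that the paper uses from the start, and your explicit choice of $N$ with $C<1$ makes the tail estimate slightly more transparent than the paper's bound via $(1-x)^{-|s|}$; you are also (correctly) a bit more careful in observing that the construction yields \emph{at most} simple poles at the listed points.
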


\begin{proof}
Using Binet's formula for $a_n$ we can expand $a_n^{-s}$ using the binomial series to obtain
\[
	a_n^{-s}=\sum_{k_1=0}^{\infty}\sum_{k_2=0}^{k_1}...\sum_{k_{r-1}=0}^{k_{r-2}}\binom{-s}{k_1}\binom{k_1}{k_2}...\binom{k_{r-2}}{k_{r-1}}(\lambda_1\alpha_1^n)^{-s-k_1}(\lambda_2\alpha_2^n)^{k_1-k_2}...(\lambda_r\alpha_r^n)^{k_{r-1}}.
\]

Note that this is technically only valid for the $a_n$ in which
\[
	\left|\frac{\lambda_2\alpha_2^n+...+\lambda_r\alpha_r^n}{\lambda_1\alpha_1^n}\right|<1,
\]
but, due to the assumption that $\alpha_1$ is a dominant root, this holds for all but a finite number of $n$, and we can assume without loss of generality for the purpose of analytic continuation that it is the case for every $n\in\bN$, since the terms of the Dirichlet series where it does not hold add up to an entire function.

In order to try to simplify our notation a bit we set
\[
	k_{\leq j}=\{k_1,...,k_j\},
\]
so that
\[
	\sum_{k_1=0}^{\infty}\sum_{k_2=0}^{k_1}...\sum_{k_{r-1}=0}^{k_{r-2}}=\sum_{k_{\leq r-1}=0}^{\infty, k_{\leq r-2}}\quad \text{ and }\quad \binom{-s}{k_1}\binom{k_1}{k_2}...\binom{k_{r-2}}{k_{r-1}}=\binom{-s,k_{\leq r-2}}{k_{\leq r-1}}.
\]

This way, the Dirichlet series $\sum a_n^{-s}$ can be written as
\[
	\sum_{n=1}^{\infty}\sum_{k_{\leq r-1}=0}^{\infty, k_{\leq r-2}}\binom{-s,k_{\leq r-2}}{k_{\leq r-1}}(\lambda_1\alpha_1^n)^{-s-k_1}(\lambda_2\alpha_2^n)^{k_1-k_2}...(\lambda_r\alpha_r^n)^{k_{r-1}},
\]
or
\begin{equation}\label{e:dir-before}
	\sum_{n=1}^{\infty}\sum_{k_{\leq r-1}=0}^{\infty,k_{\leq r-2}}\Lambda_{k_{\leq r-1}}(s)\left(\alpha_1^{-s-k_1}\alpha_2^{k_1-k_2}...\alpha_r^{k_{r-1}}\right)^n,
\end{equation}
where we write
\[
	\Lambda_{k_{\leq r-1}}(s)=\binom{-s,k_{\leq r-2}}{k_{\leq r-1}}\lambda_1^{-s-k_1}\lambda_2^{k_1-k_2}...\lambda_r^{k_{r-1}},
\]
to shorten the expression further. Since this functions have no poles, they do not change the reasoning that follows.

Now let us see that the series \eqref{e:dir-before} is absolutely convergent as a \emph{double} series for $\sigma=\Re(s)>0$. For the functions $\Lambda_{k_{\leq r-1}}(s)$ we have the estimate, as in \cite{Navas},
\[
	\left|\Lambda_{k_{\leq r-1}}\right|\leq (-1)^{k_1}\binom{-|s|,k_{\leq r-2}}{k_{\leq r-1}}\lambda_1^{-\sigma-k_1}|\lambda_2|^{k_1-k_2}...|\lambda_r|^{k_{r-1}}.
\]

Using this, we get that
\begin{equation}\label{e:longchain}
\begin{aligned}
	&\sum_{n=1}^{\infty}\sum_{k_{\leq r-1}=0}^{\infty, k_{\leq r-2}}\left|\Lambda_{k_{\leq r-1}}(s)\left(\alpha_1^{-s-k_1}\alpha_2^{k_1-k_2}...\alpha_r^{k_{r-1}}\right)^n\right|\leq \\
	\leq &\sum_{n=1}^{\infty}\sum_{k_{\leq r-1}=0}^{\infty, k_{\leq r-2}}(-1)^{k_1}\binom{-|s|,k_{\leq r-2}}{k_{\leq r-1}}(\lambda_1\alpha_1^n)^{-\sigma-k_1}...|\lambda_n\alpha_r^n|^{k_{r-1}}= \\
	=&\lambda_1^{-\sigma}\sum_{n=1}^{\infty}\alpha_1^{-n\sigma}\left(1-\frac{|\lambda_2\alpha_2^n|+...+|\lambda_r\alpha_r^n|}{\lambda_1\alpha_1^n}\right)^{-|s|}\leq \\
	\leq &\lambda_1^{-\sigma}\left(1-\frac{|\lambda_2\alpha_2|+...+|\lambda_r\alpha_r|}{\lambda_1\alpha_1}\right)^{-|s|}\sum_{n=1}^{\infty}\alpha_1^{-n\sigma}<+\infty.
\end{aligned}
\end{equation}

Note that the step between lines 3 and 4 assumes that the sequence of real numbers
\[
	\frac{|\lambda_2\alpha_2^n|+...+|\lambda_r\alpha_r^n|}{\lambda_1\alpha_1}
\]
is strictly decreasing and always less than $1$. Since this is eventually true (that is, true for $n\geq n_0$ for some $n_0\in\bN$) and, again, a finite number of the terms of the series only change the total by an entire function, we can assume $n_0=0$ without loss of generality.

From the reasoning \eqref{e:longchain}, the double series \eqref{e:dir-before} is absolutely convergent for $\sigma>0$, so we can change the order of summation and, since
\[
	\left|\alpha_1^{-s-k_1}\alpha_2^{k_1-k_2}...\alpha_r^{k_{r-1}}\right|=\alpha_1^{-\sigma-k_1}|\alpha_2|^{k_1-k_2}...|\alpha_r|^{k_{r-1}}<1
\]
for $\sigma>0$, $k_1\geq 0$, the double series becomes
\begin{equation}\label{e:dir-after}
	\varphi(s)=\sum_{k_{\leq r-1}=0}^{\infty, k_{\leq r-2}} \Lambda_{k_{\leq r-1}}(s)\frac{\alpha_1^{-s-k_1}\alpha_2^{k_1-k_2}...\alpha_r^{k_{r-1}}}{1-\alpha_1^{-s-k_1}\alpha_2^{k_1-k_2}...\alpha_r^{k_{r-1}}}.
\end{equation}

It remains to see that $\varphi(s)$ is a meromorphic function. Once this is done, the statement about its poles is trivial. To do this, we claim that the series \eqref{e:dir-after} converges uniformly on every compact set of the $s$-plane that does not contain any of the points $s_{n,k\leq r-1}$ defined as in \eqref{e:poles}.

Let us set
\[
	f_{k_{\leq r-1}}(s)=\Lambda_{k_{\leq r-1}}(s)\frac{\alpha_1^{-s-k_1}\alpha_2^{k_1-k_2}...\alpha_r^{k_{r-1}}}{1-\alpha_1^{-s-k_1}\alpha_2^{k_1-k_2}...\alpha_r^{k_{r-1}}}.
\]
For any $s\in\bC$ we have, for $k_1\gg0$,
\[
\begin{aligned}
	\left|\alpha_1^{s+k}\alpha_2^{-k_1+k_2}...\alpha_r^{-k_{r-1}}-1\right|&\geq \alpha_1^{\sigma+k_1}|\alpha_2|^{-k_1+k_2}...|\alpha_r|^{-k_{r-1}}-1> \\
	&>\alpha_1^{\sigma+k_1-1}|\alpha_2|^{-k_1+k_2}...|\alpha_r|^{-k_{r-1}},
\end{aligned}
\]
so there is some $k_0\gg0$ such that
\[
\begin{aligned}
	&\sum_{k_1=k_0}^{\infty}\sum_{k_2=0}^{k_1}...\sum_{k_{r-1}=0}^{k_{r-2}}\left|f_{k_{\leq r-1}}(s)\right|\leq \\
	&\leq \lambda_1^{-\sigma}\alpha_1^{-\sigma+1}\sum_{k_{\leq r-1}=0}^{\infty, k_{\leq r-2}} (-1)^{k_1}\binom{-|s|,k_{\leq r-2}}{k_{\leq r-1}}(\lambda_1\alpha_1)^{-k_1}...|\lambda_r\alpha_r|^{k_{r-1}}= \\
	&=\lambda_1^{-\sigma}\alpha_1^{-\sigma+1}\left(1-\frac{|\lambda_2\alpha_2|+...+|\lambda_r\alpha_r|}{\lambda_1\alpha_1}\right)^{-|s|}<+\infty.
\end{aligned}
\]
This bound is uniform when $s$ varies in a compact set, so we get that $\varphi(s)$, as defined by \eqref{e:dir-after}, is a meromorphic function of $s$ with simple poles at the points stated in \eqref{e:poles}.
\end{proof}

\begin{remark}\label{r:several-poles}
Note that we have not said that all the points $s_{n,k_{\leq r-1}}$ are distinct. It could happen that, for two different $r$-tuples $n,k_1,...,k_{r-1}$ and $n',k_1',...,k_{r-1}'$, $s_{n,k_{\leq r-1}}=s_{n',k'_{\leq r-1}}$. However, it is not hard to check that any pole $s_0$ of $\varphi(s)$ can only be reached by a \emph{finite} number of $r$-tuples, so this observation does not change the truth of the statements in Theorem \ref{t:meromorphic continuation}.

Though in general we do not know the answer to this question (i.e., \emph{Are the points $s_{n,k_{\leq r-1}}$ distinct for different $r$-tuples?}), it is possible to answer it in the affirmative in particular cases, as we will see later in some examples.
\end{remark}

\begin{corollary}\label{c:residues}
Following the notation of Theorem \ref{t:meromorphic continuation}, if $s_0\in\bC$ is a pole of $\varphi(s)$ and $\bm\tau(s_0)$ denotes the set of $r$-tuples $(n,k_1,...,k_{r-1})$ such that $s_{n,k_{\leq r-1}}=s_0$, the residue of $\varphi(s)$ at the pole $s=s_0$ is
\begin{equation}\label{e:residues}
\sum_{(n,k_{\leq r-1})\in\bm\tau(s_0)}\frac{1}{\log\alpha_1}\lambda_1^{-s_0-k_1}\lambda_2^{k_1-k_2}..\lambda_r{k_{r-1}}\binom{-s_0}{k_1}\binom{k_1}{k_2}...\binom{k_{r-2}}{k_{r-1}}.
\end{equation}
\end{corollary}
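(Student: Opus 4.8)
The plan is to extract the residue directly from the series \eqref{e:dir-after} established in the proof of Theorem \ref{t:meromorphic continuation}, computing the contribution of each summand separately and then adding the finitely many contributions that are singular at the given pole. Write each summand as
\[
 f_{k_{\leq r-1}}(s)=\Lambda_{k_{\leq r-1}}(s)\,\frac{\beta(s)}{1-\beta(s)},
 \qquad
 \beta(s)=\alpha_1^{-s-k_1}\alpha_2^{k_1-k_2}\cdots\alpha_r^{k_{r-1}},
\]
where for brevity I suppress the dependence of $\beta$ on $k_1,\dots,k_{r-1}$. The poles of $f_{k_{\leq r-1}}$ are exactly the zeros of $1-\beta$, that is, the points where $\beta(s)=1$. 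Setting $\alpha_1^{-s}=e^{-s\log\alpha_1}$ with $\log\alpha_1$ real (recall $\alpha_1>1$), the equation $\beta(s)=1$ unwinds, upon taking logarithms, into exactly the real and imaginary parts recorded in \eqref{e:poles}; this is the computation that already locates the poles in Theorem \ref{t:meromorphic continuation}.

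Next I would compute the residue of a single summand at one of its poles $s_0=s_{n,k_{\leq r-1}}$. Since $\beta(s)=e^{-s\log\alpha_1}\cdot c$ for a constant $c$, we have $\beta'(s)=-\log\alpha_1\,\beta(s)$, and at $s_0$ we know $\beta(s_0)=1$, hence $\beta'(s_0)=-\log\alpha_1\neq 0$. Thus $1-\beta$ has a \emph{simple} zero at $s_0$, so $s_0$ is a simple pole of $f_{k_{\leq r-1}}$, and the standard formula $\res_{s=s_0}(g/h)=g(s_0)/h'(s_0)$ with $g=\Lambda_{k_{\leq r-1}}\beta$ and $h=1-\beta$ gives
\[
 \res_{s=s_0} f_{k_{\leq r-1}}(s)
 =\frac{\Lambda_{k_{\leq r-1}}(s_0)\,\beta(s_0)}{-\beta'(s_0)}
 =\frac{\Lambda_{k_{\leq r-1}}(s_0)}{\log\alpha_1}.
\]
Expanding $\Lambda_{k_{\leq r-1}}(s_0)$ by its definition recovers precisely one summand of \eqref{e:residues}; note in particular that this value depends only on $s_0$ and on the multi-index, and not separately on $n$.

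Finally I would assemble the global residue. By Remark \ref{r:several-poles} the set $\bm\tau(s_0)$ is finite, and since for a fixed multi-index $k_1,\dots,k_{r-1}$ distinct values of $n$ produce poles with distinct imaginary parts, the tuples in $\bm\tau(s_0)$ have pairwise distinct multi-indices. Hence only finitely many summands $f_{k_{\leq r-1}}$ are singular at $s_0$, each with a simple pole, while the remaining summands are holomorphic in a neighbourhood of $s_0$ and, by the local uniform convergence on compacta avoiding the poles established at the end of the proof of Theorem \ref{t:meromorphic continuation}, sum to a function holomorphic there. Therefore $\varphi$ has a simple pole at $s_0$ whose residue equals the sum of the residues of the finitely many contributing summands, which is exactly \eqref{e:residues}.

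The one point requiring care, and the only genuine obstacle, is the interchange ``residue of the sum equals the sum of the residues''. This is legitimate precisely because, after removing the finitely many singular terms, the tail of \eqref{e:dir-after} converges uniformly on a small closed punctured disc around $s_0$ and is therefore holomorphic across $s_0$, contributing nothing to the residue. Everything else reduces to the elementary residue computation above.
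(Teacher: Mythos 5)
Your proposal is correct and follows essentially the same route as the paper: isolate the finitely many summands $f_{k_{\leq r-1}}$ of \eqref{e:dir-after} that are singular at $s_0$, compute each simple-pole residue as $\Lambda_{k_{\leq r-1}}(s_0)/\log\alpha_1$, and sum over $\bm\tau(s_0)$. Your explicit justification of the interchange of residue and summation via the uniform convergence of the non-singular tail is slightly more careful than the paper's one-line limit computation, but it is the same argument in substance.
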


\begin{proof}
First of all, note that, as we stated in Remark \ref{r:several-poles}, if $s_0\in\bC$ is a pole of $\varphi(s)$ then $\bm\tau(s_0)$ is nonempty and finite. Therefore, there is no obstacle to the calculation
\[
\begin{aligned}
	\res_{s=s_0}\varphi(s)&=\lim_{s\rightarrow s_0}(s-s_0)\varphi(s)= \\
	&=\sum_{\bm\tau(s_0)}\lim_{s\rightarrow s_0}(s-s_0)f_{k_{\leq r-1}}(s_0)=\\
	&=\sum_{\bm\tau(s_0)}\lim_{s\rightarrow s_0}\frac{(s-s_0)\Lambda_{k_{\leq r-1}}(s)\alpha_1^{-s-k_1}\alpha_2^{k_1-k_2}...\alpha_r^{k_{r-1}}}{1-\alpha_1^{-s-k_1}\alpha_2^{k_1-k_2}...\alpha_r^{k_{r-1}}}= \\
	&=\sum_{\bm\tau(s_0)}\frac{\Lambda_{k_{\leq r-1}}(s_0)}{\log\alpha_1},
\end{aligned}
\]
and using the explicit definition of $\Lambda_{k_{\leq r-1}}(s)$ we get \eqref{e:residues}.
\end{proof}

\section{Some particular series}
\label{sec:examples}

Let us now study some concrete examples of Dirichlet series associated to a LRS .

\begin{example}[Quadratic recurrence]
\label{ex:quadratic-sequence}
The first case that we focus on, which is both the simplest and the most studied, is the quadratic recurrence relation. That is, the LRS  $\{a_n\}$ that defines our Dirichlet series has an irreducible minimal polynomial $P(x)\in\bZ[x]$ of degree $2$ with a dominant root.

The most famous of this sequence is the Fibonacci sequence $\{F_n\}$, with initial terms $F_1=F_2=1$ and minimal polynomial $x^2-x-1$. Its associated Dirichlet series was studied in depth by Navas in \cite{Navas} and by Egami in \cite{Egami} (although note that the list of poles given there also contains some points which are in fact removable singularities). Its companion, the Lucas sequence, and their generalizations to LRS  of order $2$ also define Dirichlet series that were studied by Kamano in \cite{Kamano}. There are also studies about the values of these series on positive integers, like \cite{Komatsu0} or \cite{Nakamura}. The meromorphic continuation for a general sequence of order $2$ with positive coefficients is studied also in \cite{Silverman}. More recently~\cite{SmajlovicLucasHurwitz} generalizes these series to a Hurwitz-type zeta function associated to a Lucas sequence.

If the minimal polynomial $P(x)=x^2+p_1x+p_0\in\bZ[x]$ is irreducible and has one dominant root $\alpha>1$, the other root will be $\tilde{\alpha}=p_0\alpha^{-1}$, or $\norm(\alpha)\alpha^{-1}$, if $\norm$ denotes the absolute norm in $\bQ(\alpha)$. Then, our formula \eqref{e:poles} for the poles of its associated Dirichlet series yields
\[
	s_{n,k}=-k\left(2-\frac{\log |\norm(\alpha)|}{\log\alpha}\right)+i\frac{k\arg(\tilde{\alpha})+2\pi n}{\log\alpha_1}, \qquad n\in\bZ, \quad k\geq 0.
\]

Note that $\arg(\tilde{\alpha})$ is either $0$ or $\pi$, so in any case the imaginary part of $s_{n,k}$ is an integer multiple of $\pi$ divided by $\log\alpha_1$.

It is clear that in this case, all of these points are distinct (remember that, as per Remark \ref{r:several-poles}, this is not proved in the general case), and they form a semi-lattice in the half-plane $\Re(s)\leq 0$. If, for example, $\norm(\alpha)=\pm 1$, that is, $\alpha$ is a unit (as is the case, for example, in the Fibonacci sequence), these poles will be arranged in vertical lines with abscissa $-2k$, $k\geq 0$, and the other direction of the semi-lattice will be horizontal or oblique depending on whether $\norm(\alpha)=+1$ or $\norm(\alpha)=-1$.

Navas proved, in \cite{Navas}, that, in the case of the Fibonacci Dirichlet series, the values at the negative integers that are not poles are rational numbers, and derived formulas for them that depend on the Fibonacci and Lucas numbers. The first part of this result is proved in general in the following section \S\ref{sec:negative integers}.
\end{example}

\begin{example}[Cubic recurrence]
\label{ex:cubic-sequences}
The second case, which has some more variety and is much less known, is the cubic recurrence relation. That is, the LRS  $\{a_n\}$ that defines our Dirichlet series has an irreducible minimal polynomial $P(x)\in\bZ[x]$ of degree $3$ with a dominant root. An example would be the \emph{Tribonacci} sequence, defined by the recurrence relation $T_{n+3}=T_{n+2}+T_{n+1}+T_n$ with initial values $T_1=T_2=1$, $T_3=2$ (a displacement of sequence \href{https://oeis.org/A000073}{A000073} in \cite{OEIS} or \cite{Koshy}). The values at positive integers of the corresponding Dirichlet series were studied in \cite{Komatsu2}, while the function defined by the series has been recently examined in~\cite{SmajlovicTribonacci}, considering arbitrary real coefficients.

In general, for an irreducible polynomial $P(x)\in\bZ[x]$ with one dominant root $>1$, we have two possibilities: either the other two roots are complex conjugate, or they are both real (this last possibility, characterized by the discriminant of the polynomial being positive, is the \emph{casus irreducibilis}).

In the case were there are two complex conjugate roots, we have one real root $\alpha_1>1$ and two complex roots $\alpha_2$, $\alpha_3=\overline{\alpha_2}$. We have, ordering the roots appropriately $\alpha_2=\rho e^{i\theta}$, with $0<\rho<\alpha_1$ and $0<\theta<\pi$, and $\alpha_3=\rho e^{-i\theta}$. Once again, we can obtain $\norm(\alpha_1)=\alpha_1\rho^2\in\bN$, and the poles of the continuation of the Dirichlet series are the points
\[
	s_{n,k,l}=-\frac{k}{2}\left(3-\frac{\log \norm(\alpha_1)}{\log\alpha_1}\right)+i\frac{(k-2l)\theta+2\pi n}{\log\alpha_1}, \quad n\in\bZ, \quad 0\leq l\leq k.
\]

These poles, unlike in the case of quadratic recurrences, do not form a semi-lattice: they are still grouped by vertical lines (if, for example, $\alpha_1$ is a unit, they are vertical lines of abscissa $-\frac{3}{2}k$, $k\geq 0$), but in each of these lines the poles are increasingly ``blurry'' when we get away from the real axis.

Since the real part of these poles depends only on one parameter, whether or not all of these points are distinct depends solely on whether, for fixed $k$, $(k-2l)\theta+2\pi n$ can take repeated values when $0\leq l\leq k$ and $n\in\bZ$. This is equivalent to asking whether or not $\theta$ is a rational multiple of $\pi$, which is equivalent to asking whether $e^{i\theta}=\frac{\alpha_2}{|\alpha_2|}$ is a root of unity.

This question has a negative answer, for example, in the case of the Tribonacci sequence: $\alpha_1$, $\alpha_2$ and $\alpha_3$ are the roots of $x^3-x^2-x-1$, and for these values, $e^{i\theta}$ has minimal polynomial $x^{12} +  4x^{10} + 11 x^8+12x^6+11x^4+4x^2+1$, which does not divide $x^n-1$ for any $n$ because it has roots with modulus different from $1$. Thus, the poles $s_{n,k,l}$ of the Tribonacci Dirichlet series are all distinct. We do not know, in general, if this is the case for any cubic recurrence sequence with two complex roots.

For the \emph{casus irreducibilis}, that is, the case where $\alpha_1>1$ and $\alpha_2,\alpha_3\in\bR$, the structure of the poles is a bit different. Formula \eqref{e:poles} gives us the points
\[
	s_{n,k,l}=\frac{-1}{\log\alpha_1}\left(k\frac{\log|\alpha_2|}{\log\alpha_1}-l\frac{\log|\alpha_2|}{\log|\alpha_3|}\right)+i\frac{k\arg(\alpha_2)+l\arg(\alpha_2^{-1}\alpha_3)+2\pi n}{\log\alpha_1},
\]
where once again $n\in\bZ$, $0\leq l\leq k$.

Since $\alpha_2,\alpha_3\in\bR$, the imaginary part is always a multiple of $\frac{\pi}{\log\alpha_1}$, so these points are grouped into \emph{horizontal half-lines} in the half-plane $\Re(s)\leq 0$, where in each horizontal line the points get ``blurrier'' when we travel away from the imaginary axis.
\end{example}

\begin{example}[$N$-bonacci sequences]
\label{ex:N-bonacci}
Let us speak shortly of one last example of these kind of Dirichlet series, this time associated to a sequence $\{a_n\}$ satisfying the $N$-bonacci recurrence relation, i.e., whose minimal polynomial is $P_\norm(x)=x^N-x^{N-1}-...-x-1$. These polynomials are all irreducible, have one real root $\phi_N$ in the interval $(1,2)$ and all their other roots lie in the open unit disk (see \cite{Miller}). In fact, all other roots are complex if $N$ is odd, and there is exactly one more real root, which is negative, when $N$ is even. Furthermore, they are all irreducible (as a consequence of \emph{Theorem 2.2.5} in \cite{Prasolov}). It can even be proved that $\{\phi_N\}$ is a strictly increasing sequence with limit $2$.

Recurrence sequences satisfying some $N$-bonacci recurrence relation are sometimes called \emph{generalized Fibonacci numbers} or \emph{$N$-bonacci numbers} (we have the Tetranacci numbers, \href{https://oeis.org/A000288}{A000288} in \cite{OEIS}, the Pentanacci numbers, \href{https://oeis.org/A000322}{A000322}, and so forth). Since the $N$-bonacci polynomial satisfies our hypotheses, our results also apply to Dirichlet series associated to $N$-bonacci sequences.
\end{example}

\section{Values at negative integers}
\label{sec:negative integers}

In \cite{Navas}, Navas proves that the values of the meromorphic continuation of the Fibonacci Dirichlet series at the negative integers that are not poles are rational numbers that can be written in terms of the Fibonacci and Lucas sequences. We ask now what can be said, in the general setting, about the values at negative integers of our meromorphic continuation.

Clearly, some of these negative integers may be poles: for example, the points $-2m$, $m\in\bN$, for a recurrence sequence of degree $2$ whose principal root is a totally positive unit (see Example \ref{ex:quadratic-sequence}), or the points $-3m$, $m\in\bN$, for a recurrence sequence of degree $3$ with two complex roots (see Example \ref{ex:cubic-sequences}). At the points which that is not the case, we can prove that the values of the function are once again rational numbers.

Let us keep the same assumptions as in Section \ref{sec:meromorphic-continuation}, that is, $\{a_n\}$ a LRS  of integers which has an irreducible minimal polynomial $P(x)\in\bZ[x]$ with a positive dominant root, and assume (once again, without loss of generality for our purposes) that $\{a_n\}$ is strictly increasing and positive. If $\alpha_1,...,\alpha_r$ are the roots of $P(x)$ (ordered with decreasing modulus), we have
\[
	a_n=\lambda_1\alpha_1^n+...+\lambda_r\alpha_r^n \quad \forall n\in\bN
\]
for unique $\lambda_1,...,\lambda_r$ in the splitting field of $P(x)$, and the function $\varphi(s)$ defined as in \eqref{e:dir-after} represents a meromorphic continuation of the Dirichlet series $\sum a_n^{-s}$ to the whole $s$-plane.

In the following proof we will use multi-index notation, so we introduce it here. A multi-index of \emph{length} $r$ is an element $\bm\beta=(\beta_1,...,\beta_r)\in\bN_0^r$. Given $\bm\beta$, we write:
\begin{enumerate}[label=$\bullet$]
\item $|\bm\beta|=\beta_1+...+\beta_r$.

\item $\displaystyle\binom{|\bm\beta|}{\bm\beta}=\frac{|\bm\beta|!}{\beta_1!...\beta_r!}$.

\item Given $\bm z=(z_1,...,z_r)\in\bC^r$, ${\bm z}^{\bm\beta}=z_1^{\beta_1}...z_r^{\beta_r}$.
\end{enumerate}

\begin{theorem}\label{t:values-negative-integers}
With our previous hypothesis, if $m\in\bN$ verifies that $s=-m$ is not a pole of $\varphi(s)$, then $\varphi(-m)\in\bQ$.
\end{theorem}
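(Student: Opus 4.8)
The plan is to evaluate $\varphi(-m)$ directly from the meromorphic expression \eqref{e:dir-after} and then exploit a Galois symmetry. The decisive feature of a \emph{negative integer} argument is that the binomial factor $\binom{-s}{k_1}$ hidden inside $\Lambda_{k_{\leq r-1}}(s)$ specializes to $\binom{m}{k_1}$ at $s=-m$, which vanishes for $k_1>m$. Hence the outer sum over $k_1$ truncates and $\varphi(-m)$ collapses to a \emph{finite} sum whose summands lie in the splitting field $K$ of $P$. Reindexing the surviving tuples $(k_1,\dots,k_{r-1})$ by multi-indices $\bm\beta$ of length $r$ through $\beta_1=m-k_1$, $\beta_j=k_{j-1}-k_j$ for $2\le j\le r-1$ and $\beta_r=k_{r-1}$, the constraints $0\le k_i\le k_{i-1}$ together with $k_1\le m$ become exactly $\beta_i\ge0$ and $|\bm\beta|=m$; the nested binomials telescope to the multinomial $\binom{|\bm\beta|}{\bm\beta}$; and $\alpha_1^{-s-k_1}\alpha_2^{k_1-k_2}\cdots\alpha_r^{k_{r-1}}$ becomes $\bm\alpha^{\bm\beta}$. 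I therefore expect the identity
\[
\varphi(-m)=\sum_{|\bm\beta|=m}\binom{|\bm\beta|}{\bm\beta}\,\bm\lambda^{\bm\beta}\,\frac{\bm\alpha^{\bm\beta}}{1-\bm\alpha^{\bm\beta}},
\]
which is nothing but the regularized form of $\sum_n a_n^m$ obtained from the multinomial expansion of $a_n^m$ combined with the geometric sum $\sum_n z^n=z/(1-z)$.

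With this finite formula in hand, rationality will follow from a symmetry argument. Let $G=\operatorname{Gal}(K/\bQ)$; since $P$ is irreducible, each $\sigma\in G$ permutes the roots, $\sigma(\alpha_i)=\alpha_{\pi(i)}$ for a permutation $\pi=\pi_\sigma$. Applying $\sigma$ to $a_n=\sum_i\lambda_i\alpha_i^n$, using that $a_n\in\bZ$ is fixed and that the Binet coefficients are unique (Proposition \ref{p:Binet-formula} and \eqref{e:coefficient-matrix}), forces $\sigma(\lambda_i)=\lambda_{\pi(i)}$ as well. Consequently $\sigma$ sends the $\bm\beta$-summand to the $\pi\cdot\bm\beta$-summand, where $(\pi\cdot\bm\beta)_j=\beta_{\pi^{-1}(j)}$: the multinomial coefficient is permutation-invariant, $\sigma(\bm\alpha^{\bm\beta})=\bm\alpha^{\pi\cdot\bm\beta}$, $\sigma(\bm\lambda^{\bm\beta})=\bm\lambda^{\pi\cdot\bm\beta}$, and $|\pi\cdot\bm\beta|=m$. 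Thus $\sigma$ merely permutes the summands of the finite sum, so $\varphi(-m)$ is fixed by every $\sigma\in G$ and therefore lies in $\bQ$.

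The main obstacle is making the passage to the finite sum rigorous, which is where the hypothesis that $-m$ is not a pole enters. First I would justify evaluating \eqref{e:dir-after} termwise at $s=-m$: the series converges locally uniformly away from the (locally finite) pole set, so $\varphi(-m)=\lim_{s\to-m}\varphi(s)=\sum_{k_{\leq r-1}}\lim_{s\to-m}f_{k_{\leq r-1}}(s)$. Next I must verify that the tail $k_1>m$ contributes nothing: there $\binom{m}{k_1}=0$ kills the coefficient whenever the denominator $1-\bm\alpha^{\bm\beta}$ is nonzero, and one uses the dominant-root hypothesis $|\alpha_i|<\alpha_1$ to rule out a ``generalized'' relation $\bm\alpha^{\bm\beta}=1$ with $\beta_1=m-k_1<0$ (the modulus of $\alpha_2^{k_1-k_2}\cdots\alpha_r^{k_{r-1}}$ is then strictly smaller than $\alpha_1^{k_1-m}$, for instance whenever all non-dominant roots lie in the unit disc, as in the $N$-bonacci families of Example \ref{ex:N-bonacci}). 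Finally, for genuine multi-indices ($k_1\le m$) I would check that $-m$ failing to be a pole is precisely the statement that every denominator $1-\bm\alpha^{\bm\beta}$ is nonzero, so that no summand blows up; the delicate point to settle is that a relation $\bm\alpha^{\bm\beta}=1$ among genuine multi-indices produces an honest pole (a nonvanishing total residue, by Corollary \ref{c:residues}) rather than a removable singularity, which is what guarantees the clean finite formula above under the stated hypothesis.
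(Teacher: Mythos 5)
Your proposal is correct and its skeleton coincides with the paper's own proof: both truncate the sum in \eqref{e:dir-after} at $k_1\le m$ using the vanishing of $\binom{m}{k_1}$, rewrite the surviving finite sum in multi-index form as \eqref{e:phi-m} (your reindexing $\beta_1=m-k_1$, $\beta_j=k_{j-1}-k_j$, $\beta_r=k_{r-1}$ and the telescoping of the nested binomials into $\binom{m}{\bm\beta}$ are exactly what the paper does), and conclude by showing the resulting element of the splitting field is fixed by the group permuting the roots. Where you genuinely diverge is in how the action on the Binet coefficients is controlled. The paper solves for the $\lambda_i$ by Cramer's rule, writes them in terms of Vandermonde-type minors $M_{ij}$, and checks by hand that each transposition $\tau_{\gamma\delta}$ multiplies these minors and the Vandermonde determinant by compatible signs, so that \eqref{e:phi-m-fraction} is invariant under all of $\cS_r$. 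You instead apply $\sigma\in\operatorname{Gal}(K/\bQ)$ to the identity $a_n=\sum_i\lambda_i\alpha_i^n$ and invoke the uniqueness of the Binet coefficients (invertibility of the Vandermonde system, since $P$ is separable) to get $\sigma(\lambda_i)=\lambda_{\pi_\sigma(i)}$ directly. This yields only Galois-invariance rather than full $\cS_r$-invariance, but that is all rationality requires, and it eliminates the determinant bookkeeping entirely. What the paper's longer route buys is the extra structural information that $\varphi(-m)$ is a symmetric rational function of the roots with the explicit integral denominator $\prod_{|\bm\beta|=m}(1-\bm\alpha^{\bm\beta})$, which is useful if one wants denominator bounds for the rational values, as in the Fibonacci case.

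You also flag, correctly, two points that the paper's proof passes over in silence: a tuple with $k_1>m$ contributes $0$ only when its denominator $1-\alpha_1^{m-k_1}\alpha_2^{k_1-k_2}\cdots\alpha_r^{k_{r-1}}$ does not also vanish, since otherwise the term is a $0/0$ with a finite, generally nonzero, limit at $s=-m$; and the hypothesis that $-m$ is not a pole forces the denominators $1-\bm\alpha^{\bm\beta}$ of the genuine multi-indices to be nonzero only if one knows that the residues in Corollary \ref{c:residues} cannot cancel. Your unit-disc argument disposes of the first point for the $N$-bonacci family but not, say, for cubic fields with a second root of modulus greater than $1$, and you leave the second open. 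Neither is resolved in the published proof either, so your proposal is no less complete than the paper's argument; closing these two points would strengthen both.
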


\begin{proof}
For $m\in\bN$, the binomial coefficient
\[
	\binom{-(-m)}{k_1}=\binom{m}{k_1}
\]
is nonzero only if $k_1\leq m$, so the sum in $k_1$ that appears in the expression \eqref{e:dir-after} becomes a finite sum, that is,
\[
	\varphi(-m)=\sum_{k_{\leq r-1=0}}^{m,k_{\leq r-2}}\Lambda_{k_{\leq r-1}}(-m)\frac{\alpha_1^{m-k_1}\alpha_2^{k_1-k_2}...\alpha_r^{k_{r-1}}}{1-\alpha_1^{m-k_1}\alpha_2^{k_1-k_2}...\alpha_r^{k_{r-1}}}.
\]
Since the sum of the exponents of the $\alpha_i$ (and those of the $\lambda_i$ in the full expansion of $\Lambda_{k_{\leq r-1}}(-m)$) equals $m$ in every term, this expression can be written more cleanly using multi-indices,
\begin{equation}\label{e:phi-m}
	\varphi(-m)=\sum_{|\bm\beta|=m}\binom{m}{\bm\beta}{\bm\lambda}^{\bm\beta}\frac{{\bm\alpha}^{\bm\beta}}{1-{\bm\alpha}^{\bm\beta}},
\end{equation}
where $\bm\lambda=(\lambda_1,...,\lambda_r)$ and $\bm\alpha=(\alpha_1,...,\alpha_r)$.

We will show that \eqref{e:phi-m} is an element of $\bQ(\alpha_1,...,\alpha_r)^{\cS_r}$, that is, a rational function of $\alpha_1,...,\alpha_r$ invariant under the action of the symmetric group $\cS_r$. This means in particular that it is an element of the decomposition field of $P(x)$ invariant under its Galois group, so it is a rational number.

We can write
\begin{equation}\label{e:phi-m-fraction}
	\varphi(-m)=\frac{\displaystyle\sum_{\bm\beta|=m}\binom{m}{\bm\beta}(\bm\lambda\bm\alpha)^{\bm\beta}\underset{\bm\beta'\neq\bm\beta}{\prod_{|\bm\beta'|=m}}(1-{\bm\alpha}^{\bm\beta'})}{\displaystyle\prod_{|\bm\beta|=m}(1-{\bm\alpha}^{\bm\beta})}
\end{equation}

The denominator of this expression is evidently invariant under permutations of the $\alpha_i$, so it is an integer. Since $-m$ is not a pole of $\varphi(s)$, it must be nonzero. It remains to see that the numerator in \eqref{e:phi-m-fraction} is a rational number.

The main difficulty here is dealing with the $\lambda_i$. Note, however, that from Binet's formula we can obtain the matrix equation
\[
	\left(\begin{array}{ccc}
	\alpha_1 & \hdots & \alpha_r \\
	\vdots & \ddots & \vdots \\
	\alpha_1^r & \hdots & \alpha_r^r\end{array}\right)
	\left(\begin{array}{c}
	\lambda_1 \\
	\vdots \\
	\lambda_r\end{array}\right)=
	\left(\begin{array}{c}
	a_1 \\
	\vdots \\
	a_r\end{array}\right),
\]
and using Cramer's rule we get that each $\lambda_i$ is
\begin{equation}\label{e:lambda-cramer}
	\lambda_i=
	\left|\begin{array}{ccccccc}
	\alpha_1 & \hdots & \alpha_{i-1} & a_1 & \alpha_{i+1} & \hdots & \alpha_r \\
	\vdots & & \vdots & \vdots & \vdots & & \vdots \\
	\alpha_1^r & \hdots & \alpha_{i-1}^r & a_r & \alpha_{i+1}^r & \hdots & \alpha_r^r\end{array}\right|
	\left|\begin{array}{ccc}
	\alpha_1 & \hdots & \alpha_r \\
	\vdots & \ddots & \vdots \\
	\alpha_1^r & \hdots & \alpha_r^r\end{array}\right|^{-1}.
\end{equation}

The second of the determinants in \eqref{e:lambda-cramer} is clearly $\alpha_1...\alpha_r\vand(\alpha_1,...,\alpha_r)$, if $\vand(...)$ is the Vandermonde determinant, and a simple computation gives us that the first is
\[
	(-1)^{i-1}\sum_{j=1}^r (-1)^ja_jM_{ij},
\]
where $M_{ij}$ is the determinant
\[
	M_{ij}=\left|\begin{array}{cccccc}
	\alpha_1 & \hdots & \alpha_{i-1} & \alpha_{i+1} & \hdots & \alpha_r \\
	\vdots & & \vdots & \vdots & & \vdots \\
	\alpha_1^{j-1} & \hdots & \alpha_{i-1}^{j-1} & \alpha_{i+1}^{j-1} & \hdots & \alpha_r^{j-1} \\
	\alpha_1^{j+1} & \hdots & \alpha_{i-1}^{j+1} & \alpha_{i+1}^{j+1} & \hdots & \alpha_r^{j+1} \\
	\vdots & & \vdots & \vdots & & \vdots \\
	\alpha_1^r & \hdots & \alpha_{i-1}^r & \alpha_{i+1}^r & \hdots & \alpha_r^r\end{array}\right|,
\]
that is, the Vandermonde determinant with the $i$-th column and the $j$-th row removed.

With this, equation \eqref{e:lambda-cramer} becomes
\[
	\lambda_i=(-1)^{i-1}\frac{\displaystyle\sum_{j=1}^r (-1)^j a_j M_{ij}}{\alpha_1...\alpha_r\vand(\alpha_1,...,\alpha_r)},
\]
so
\[
	{\bm\lambda}^{\bm\beta}=\prod_{i=1}^r\left((-1)^{i-1}\frac{\displaystyle\sum_{j=1}^r (-1)^j a_j M_{ij}}{\alpha_1...\alpha_r\vand(\alpha_1,...,\alpha_r)}\right)^{\beta_i}
\]
and the numerator of \eqref{e:phi-m-fraction} is
\begin{equation}\label{e:numerator}
	\sum_{|\bm\beta|=m}\binom{m}{\bm\beta}\left(\prod_{i=1}^r\left((-1)^{i-1}\frac{\displaystyle\sum_{j=1}^r (-1)^j a_j M_{ij}}{\vand(\alpha_1,...,\alpha_r)}\right)^{\beta_i}\right)\underset{\bm\beta'\neq\bm\beta}{\prod_{|\bm\beta'|=m}}(1-{\bm\alpha}^{\bm\beta'}).
\end{equation}

Let us check that this is invariant under permutations of the $\alpha_i$. Clearly, it suffices to check that it is invariant under transpositions, so let $\tau_{\gamma\delta}$, with $\gamma<\delta$, be the trasposition that interchanges $\alpha_{\gamma}$ and $\alpha_{\delta}$.

First, it is trivial to see that
\[
	\tau_{\gamma\delta}(\vand(\alpha_1,...,\alpha_r))=(-1)\vand(\alpha_1,...,\alpha_r).
\]

Second, for the determinants $M_{ij}$, we get that
\[
	\tau_{\gamma\delta}(M_{ij})=(-1)M_{ij}\quad\text{ if }i\neq\gamma,\delta,
\]
and that
\[
	\tau_{\gamma\delta}(M_{\gamma j})=(-1)^{\delta-\gamma-1}M_{\delta j}, \qquad \tau_{\gamma\delta}(M_{\delta j})=(-1)^{\delta-\gamma-1} M_{\gamma j}.
\]

As a consequence, in the product
\[
	{\bm\lambda}^{\bm\beta}{\bm\alpha}^{\bm\beta}=\prod_{i=1}^r\left((-1)^{i-1}\frac{\displaystyle\sum_{j=1}^r (-1)^j a_j M_{ij}}{\vand(\alpha_1,...,\alpha_r)}\right)^{\beta_i},
\]
$\tau_{\gamma\delta}$ does not change the factors with $i\neq\gamma,\delta$, and it interchanges the bases of the factors with $i=\gamma$ and $i=\delta$. Therefore, the effect of $\tau_{\gamma\delta}$ on $(\bm\lambda\bm\alpha)^{\bm\beta}$ is to change
\[
	\bm\beta=(\beta_1,...,\beta_{\gamma},...\beta_{\delta},...\beta_r)
\]
into
\[
	(\beta_1,...,\beta_{\delta},...,\beta_{\gamma},...,\beta_r).
\]
Checking now what happens to each term of the sum in \eqref{e:numerator}, if we denote by $\tau_{\gamma\delta}(\bm\beta)$ the result of interchanging the $\gamma$-th and $\delta$-th components, we get that
\[
	\tau_{\gamma\delta}\left(\binom{m}{\bm\beta}(\bm\lambda\bm\alpha)^{\bm\beta}\underset{\bm\beta'\neq\bm\beta}{\prod_{|\bm\beta'|=m}}(1-{\bm\alpha}^{\bm\beta'})\right)=\binom{m}{\bm\beta}(\bm\lambda\bm\alpha)^{\tau_{\gamma\delta}(\bm\beta)}\underset{\bm\beta'\neq\bm\beta}{\prod_{|\bm\beta'|=m}}(1-{\bm\alpha}^{\tau_{\gamma\delta}(\bm\beta')}).
\]

Since
\[
	\binom{m}{\bm\beta}=\binom{m}{\tau_{\gamma\delta}(\bm\beta)},
\]
we find that the effect of $\tau_{\gamma\delta}$ in the full sum
\[
	\sum_{|\bm\beta|=m}\binom{m}{\bm\beta}(\bm\lambda\bm\alpha)^{\bm\beta}\underset{\bm\beta'\neq\bm\beta}{\prod_{|\bm\beta'|=m}}(1-{\bm\alpha}^{\bm\beta'})
\]
is to just commute its terms, which leaves the sum invariant. Therefore, we get that the numerator in \eqref{e:phi-m-fraction} is also invariant under permutations of the $\alpha_i$, which means it is a rational number.
\end{proof}

\section{Closing remarks}
\label{sec:closing}

Even if we are mainly interested in studying integer LRS whose minimal polynomial is irreducible, many of our results are still true if we remove some of these restrictions. For example, the proof of our main result about meromorphic continuation, Theorem \ref{t:meromorphic continuation}, can be carried in exactly the same way if, instead of an integer LRS with irreducible minimal polynomial, we relax the hypotheses to allow for a \emph{real recurrence sequence with a dominant root $>1$ with multiplicity $1$} (there can be multiple roots as long as they are not the dominant one). Corollary \ref{c:residues} holds under the same hypotheses.

As for Theorem \ref{t:values-negative-integers} about the values of the zeta function at negative integers which are not poles, once again the same proof holds if we assume the sequence to be only rational, not integer, but still with a separable (not necessarily irreducible) and with a dominant root $>1$.

\end{document}